\newtheorem{thm}{Theorem}[section] 
\newtheorem{exmpl}[thm]{Example}
\newtheorem{lem}[thm]{Lemma}
\newtheorem{prop}[thm]{Proposition}
\newtheorem{rem}[thm]{Remark}
\newcommand\operA[2]{{\if!#2!\operatorname{#1}\else{\operatorname{#1}_{#2}^{\phantom{I}}}\fi}} 
\newcommand\Cref[1]{{Corollary~\ref{#1}}}%
\newcommand{\Trace}[1][]{\if!#1!\operatorname{Tr}\else{\operatorname{Tr}_{#1}^{\phantom{I}}}\fi} 
\long\def\forget#1\forgotten{{}} %
\def\({\left(}
\def\){\right)}
\newif\iffurther
\newif\ifXY 
\journal{??}
\begin{document}

\begin{frontmatter}

\title{Common subfields of p-algebras of prime degree}

\author{Adam Chapman}
\ead{adam1chapman@yahoo.com}

\address{Department of Mathematics, Michigan State University, East Lansing, MI 48824}

\begin{abstract}
We show that if two division $p$-algebras of prime degree share an inseparable field extension of the center then they also share a cyclic separable one. We show that the converse is in general not true. We also point out that sharing all the inseparable field extensions of the center does not imply sharing all the cyclic separable ones.
\end{abstract}

\begin{keyword}
Central Simple Algebras, Cyclic Algebras, p-Algebras, Linkage, Division Algebras
\MSC[2010] 16K20
\end{keyword}

\end{frontmatter}

\section{Introduction}

A $p$-algebra is a central simple algebra of prime power degree $p^m$ over a field $F$ of characteristic $p$.
If a $p$-algebra is cyclic of prime degree $p$ then it has a symbol presentation $[\alpha,\beta)_{p,F}$ for some $\alpha \in F$ and $\beta \in F^\times$ which stands for the algebra generated over $F$ by $x$ and $y$ subject to the relations $x^p-x=\alpha$, $y^p=\beta$ and $y x y^{-1}=x+1$.

The symbol presentation of an algebra is not unique. For example $[\alpha,\beta)_{p,F}$ and $[\alpha+\beta,\beta)_{p,F}$ present the same algebra.
We say that two algebras are right linked if they have symbol presentations sharing the right slot, e.g. $[\alpha,\beta)_{p,F}$ and $[\gamma,\beta)_{p,F}$ are right linked.
We say that two algebras are left linked if they have symbol presentations sharing the left slot, e.g. $[\alpha,\beta)_{p,F}$ and $[\alpha,\delta)_{p,F}$ are left linked.

In \cite{Draxl:quatsep} Draxl proved that if two $p$-algebras of degree $2$ (i.e. quaternion algebras) are right linked then they are also left linked.
In \citep{Lam:char2quat} Lam gave a simpler proof and showed that the converse is not true. In \citep{ElduqueVilla:2005} this was generalized to Hurwitz algebras, and in \cite{fairve:thesis} this was translated and generalized to arbitrary $n$-fold Pfister forms.

In this note we prove that being right linked implies being left linked for any prime $p$ and that the converse is in general not true.
We also point out that sharing all the inseparable field extensions of the center does not imply sharing all the cyclic separable ones.

\section{Artin-Schreier and $p$-central elements}

Given a $p$-algebra $A$ over $F$, an element $x \in A$ satisfying $x^p-x \in F$ is called Artin-Schreier, and an element $y \in A$ satisfying $y^p \in F$ is called $p$-central.
The Artin-Schreier elements are exactly the elements generating cyclic separable field extensions over the center, and the $p$-central elements are exactly the elements generating pure inseparable fields extensions over the center

If two nonzero elements $x$ and $y$ in $A$ satisfy $y x y^{-1}=x+1$ then $x$ is Artin-Schreier, $y$ is $p$-central and $A=[x^p-x,y^p)_{p,F}$. For any Artin-Schreier element $x \in A$ there exists an element $y \in A$ such that $y x y^{-1}=x+1$ (see \cite[Chapter 5, Theorem 9]{Albert:1968}). Similarly, for any $p$-central element $y \in A$ there exists an element $x \in A$ such that $y x y^{-1}=x+1$ (see \cite[Chapter 7, Lemma 10]{Albert:1968}). 

Therefore, two $p$-algebras of prime degree $p$ over $F$ contain a common purely inseparable degree $p$ field extension of the center if and only if they are right linked.
They contain a common cyclic separable degree $p$ extension of the center if and only if they are left linked.

\section{Right linkage implies left linkage}

We shall now show that being right linked implies being left linked.

\begin{rem}
In \cite[Lemma 2.2]{ChapmanKuo2015} it was observed that if $x$ is Artin-Schreier then every other element $t$ decomposes as $t_0+t_1+\dots+t_{p-1}$ where $t_i x-x t_i=i t_i$ for each $0 \leq i \leq p-1$.
This decomposition is unique:
Assume $t$ decomposes both as $t_0+t_1+\dots+t_{p-1}$ and $t_0'+t_1'+\dots+t_{p-1}'$. Then 
$$t_0+t_1+\dots+t_{p-1}=t_0'+t_1'+\dots+t_{p-1}'.$$
By computing $t x-x t$ for both decompositions we obtain
$$t_1+2 t_1+\dots+(p-1) t_{p-1}=t_1'+2 t_2'+\dots+(p-1) t_{p-1}'.$$
By repeating this process several times, we end up with a system of $p$ linearly independent equations, and the solution to this system is $t_i=t_i'$ for each $0 \leq i \leq p$.
The decomposition $t=t_0+\dots+t_{p-1}$ is in fact the eigenvector decomposition of $t$ with respect to the linear transformation 
$$v \rightarrow v x-x v$$
with each $t_i$ lying in the eigenspace of the eigenvalue $i$.
\end{rem}

\begin{lem}\label{Formula}
In a division $p$-algebra $A$ of prime degree $p$ over $F$, if two nonzero elements $x$ and $y$ satisfy $y x-x y=k y$ for some $1 \leq k \leq p-1$, then $(x+y)^p-(x+y)=x^p-x+y^p$.
\end{lem}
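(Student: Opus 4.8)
The plan is to reduce the statement to the single operator identity $(x+y)^p = x^p + y^p + y$, which upon rearranging is exactly the claim, and to obtain this identity from Jacobson's refinement of the Frobenius (``freshman's'') formula for $p$-th powers in characteristic $p$. Before doing so I would record two preliminary facts that make the statement meaningful and guide the computation. First, rewriting $yx - xy = ky$ as $yxy^{-1} = x+k$, one sees that conjugation by $y$ shifts $x$ by the scalar $k$; applying the same computation to $w := x+y$ gives $y w y^{-1} = (x+k)+y = w + k$, so $w$ obeys the very same relation as $x$. Since $1 \le k \le p-1$, the element $k^{-1}w$ then satisfies the normalized relation of the excerpt and is therefore Artin--Schreier, which confirms that $(x+y)^p - (x+y) \in F$; likewise $x^p - x \in F$ and $y^p \in F$, so both sides of the asserted equality lie in $F$.

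For the exact value I would invoke Jacobson's formula: in any associative ring of characteristic $p$,
$$(a+b)^p = a^p + b^p + \sum_{i=1}^{p-1} s_i(a,b),$$
where $i\, s_i(a,b)$ is the coefficient of $t^{i-1}$ in $\operatorname{ad}(ta+b)^{p-1}(a)$, with $\operatorname{ad}(c)(v) = cv - vc$. I would apply this with $a = x$ and $b = y$, so that the only input is the bracket $[y,x] = ky$, equivalently $[x,y] = -ky$.

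The heart of the argument is the iterated adjoint action. Writing $c = tx + y$ with $t$ a central indeterminate, one computes $\operatorname{ad}(c)(x) = [y,x] = ky$ and $\operatorname{ad}(c)(y) = t[x,y] = -tk\,y$; thus $\operatorname{ad}(c)$ maps the line $Fy$ into itself by the scalar $-tk$ and carries $x$ into $Fy$. Iterating gives $\operatorname{ad}(c)^{m}(x) = k(-tk)^{m-1} y$ for every $m \ge 1$, and in particular $\operatorname{ad}(c)^{p-1}(x) = k^{p-1}(-t)^{p-2} y = (-1)^{p-2} t^{p-2} y$, using $k^{p-1} = 1$ in $\mathbb{F}_p$. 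Since this is a single monomial in $t$ of degree $p-2$, only the term $i = p-1$ survives: $(p-1)\, s_{p-1} = (-1)^{p-2} y$, and because $(p-1)^{-1} = -1$ in $\mathbb{F}_p$ one finds $s_{p-1} = (-1)^{p-1} y = y$ for every prime $p$, while $s_i = 0$ for $i < p-1$. Hence $(x+y)^p = x^p + y^p + y$, which is the desired identity.

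The main obstacle, and the reason a refined formula is needed rather than a one-line trace computation, is precisely the determination of the scalar value once one knows $(x+y)^p - (x+y) \in F$: in characteristic $p$ the reduced trace of a scalar vanishes, so the scalar cannot be read off from $\operatorname{Trd}$, and a direct expansion of $(x+y)^p$ leads to a differential recursion for the coefficients of $x^i y^j$ that is awkward to evaluate at exponent $p$. Jacobson's formula sidesteps this by collapsing the entire correction to the single term $s_{p-1}$; the only delicate points are the characteristic-$p$ arithmetic $k^{p-1}=1$ and $(p-1)^{-1}=-1$ together with the sign bookkeeping in the iterated bracket, which I would double-check against the cases $p=2$ (where $k=1$ and $(x+y)^2 = x^2 + y^2 + y$) and $p=3$.
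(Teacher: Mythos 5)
Your proof is correct, and it takes a genuinely different route from the paper's. The paper first shows $x+y$ is Artin--Schreier (conjugating by $y^m$ with $mk\equiv 1 \pmod p$, which uses invertibility of $y$), so that $(x+y)^p-(x+y)$ is central, and then invokes the uniqueness of the eigenvector decomposition with respect to $v\mapsto vx-xv$: grouping the expansion of $(x+y)^p-(x+y)$ by the number of $y$'s, each group lies in a distinct eigenspace, so the central element must equal the eigenvalue-$0$ component $x^p-x+y^p$. You instead get the stronger operator identity $(x+y)^p=x^p+y^p+y$ directly from Jacobson's formula: since $\operatorname{ad}(tx+y)$ sends $x$ to $ky$ and rescales $y$ by $-tk$, the iterate $\operatorname{ad}(tx+y)^{p-1}(x)=k^{p-1}(-t)^{p-2}y$ is a single monomial, so all $s_i$ vanish except $s_{p-1}=(-1)^{p-1}y=y$ (your arithmetic with $k^{p-1}=1$ and $(p-1)^{-1}=-1$ is right, and the conclusion is even insensitive to the sign convention for $\operatorname{ad}$, since $\operatorname{ad}^{p-1}$ differs by $(-1)^{p-1}$ between the two conventions, which is trivial in characteristic $p$). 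What your approach buys: it is a purely formal computation valid in \emph{any} associative ring of characteristic $p$ satisfying $yx-xy=ky$, so the hypotheses that $A$ is division and of prime degree become superfluous, as does your first paragraph (the Artin--Schreier observation is only a sanity check in your argument, whereas it is the crux of the paper's). What the paper's approach buys: it avoids importing Jacobson's formula from restricted Lie algebra theory, using only the elementary eigenspace remark already established in the paper, and it keeps the argument in the same conceptual framework (Artin--Schreier and $p$-central elements) used throughout the rest of the note.
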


\begin{proof}
The element $x+y$ is Artin-Schreier, because it satisfies $y^m (x+y) y^{-m}=x+y+1$ where $m$ is the unique integer satisfying $mk \equiv 1 \pmod{p}$. Therefore $(x+y)^p-(x+y) \in F$. According to the previous remark, there is a unique decomposition of $(x+y)^p-(x+y)$ as $t_0+\dots+t_{p-1}$ where $t_i x-x t_i=i t_i$ for each $0 \leq i \leq p-1$. In this case, $t_i=x^{p-i} * y^i$ for each $k \neq i \geq 1$, $t_k=x^{p-1} * y-y$ and $t_0=x^p+y^p-x$, where $x^r * y^s$ stands for the sum of all the words in which $x$ appears $r$ times and $y$ appears $s$ times. Since $(x+y)^p-(x+y) \in F$, we have $(x+y)^p-(x+y)=t_0$.
\end{proof}

\begin{thm}
If two division $p$-algebras of prime degree $p$ are right linked then they are left linked.
\end{thm}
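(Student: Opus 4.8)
The plan is to translate the hypothesis through the dictionary of \Sref{} (Section~2) and then manufacture, inside each algebra, an explicit family of cyclic separable subfields whose Artin--Schreier invariants I can read off directly from \Lref{Formula}; the theorem then becomes the assertion that these two families must meet.

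First I would fix notation. Being right linked means, by the discussion in Section~2, that the two algebras share a purely inseparable degree-$p$ extension of $F$, so I may write $A_1=[\alpha,\beta)_{p,F}$ and $A_2=[\gamma,\beta)_{p,F}$ with the \emph{same} right slot realised by one and the same $p$-central element $y$ (with $y^p=\beta$) sitting in both. Thus $A_1$ is generated by $x_1,y$ with $x_1^p-x_1=\alpha$ and $yx_1y^{-1}=x_1+1$, and likewise $A_2$ by $x_2,y$ with $x_2^p-x_2=\gamma$. To prove left linkage it suffices, again by Section~2, to produce a single $a\in F$ together with Artin--Schreier elements $u_1\in A_1$ and $u_2\in A_2$ having $u_1^p-u_1=u_2^p-u_2=a$, that is, a common cyclic separable field $F[t]/(t^p-t-a)$.

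Next I would feed \Lref{Formula} suitable pairs. For $g\in F[x_1]$ put $z=g(x_1)\,y$. Using $y\,g(x_1)=g(x_1+1)\,y$ one gets $z\,x_1-x_1\,z=z$, so $z$ is an eigenvector with $k=1$, and
\[
z^p=\Big(\prod_{i=0}^{p-1}g(x_1+i)\Big)\beta=\mathrm{N}_{F(x_1)/F}\big(g(x_1)\big)\,\beta\in F ,
\]
so $z$ is $p$-central. The Lemma then gives that $x_1+z$ is Artin--Schreier with $(x_1+z)^p-(x_1+z)=\alpha+\mathrm{N}_{F(x_1)/F}(g(x_1))\,\beta$. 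Hence $A_1$ realises every value in $\alpha+N_1\beta$, where $N_1=\mathrm{N}_{F(x_1)/F}(F(x_1)^\times)$ is the norm group; replacing $y$ by $y^k$ enlarges this to $\alpha+N_1\beta^k$, and iterating from each newly produced generator enlarges it further. By the identical argument $A_2$ realises the values $\gamma+N_2\beta^k$ with $N_2=\mathrm{N}_{F(x_2)/F}(F(x_2)^\times)$.

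Finally the theorem reduces to a matching statement: it suffices to find $n\in N_1$, $n'\in N_2$ and $j\in\{1,\dots,p-1\}$ with $\alpha+n\beta\equiv j(\gamma+n'\beta)$ modulo the Artin--Schreier subgroup $\wp(F)=\{c^p-c:c\in F\}$, for then the two families contain isomorphic cyclic fields. This is where I expect the real difficulty to lie: the naive elements $x_i+cy$ give only $n=c^p\in F^p$, and one checks that solvability with these alone would force $A_1\cong[j\gamma,\beta)_{p,F}$, which is false in general, so the full norm groups are genuinely needed. To solve the norm equation I would exploit that the algebras are division, i.e. $\beta\notin N_1$ and $\beta\notin N_2$, and encode the problem in the reduced-norm (Pfister-type) forms of $A_1$ and $A_2$, which share the common slot $\beta$; the common-slot technique of Draxl and Lam cited in the introduction, now available for all $p$ because the Lemma supplies the characteristic-$p$ analogue of their characteristic-$2$ computation, should then force a common binary Artin--Schreier subform and hence the desired $a$. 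The main obstacle is thus not the production of candidate subfields but proving that the two norm-translated families are guaranteed to intersect.
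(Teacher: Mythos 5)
Your setup and your production of Artin--Schreier families are correct, and in fact they subsume the paper's construction: the elements $x_1+g(x_1)y$ with $g\in F[x_1]$ are exactly the right tool, and your computation $(x_1+g(x_1)y)^p-(x_1+g(x_1)y)=\alpha+\mathrm{N}_{F[x_1]/F}(g(x_1))\,\beta$ is the correct application of Lemma~\ref{Formula}. The gap is the final matching step, which you explicitly leave open and propose to settle by ``the common-slot technique of Draxl and Lam \dots now available for all $p$.'' That appeal is circular: the Draxl--Lam result \emph{is} this theorem in the special case $p=2$, and nothing analogous is established for odd $p$ --- indeed for $p>2$ the reduced norm of a degree-$p$ algebra is a form of degree $p$, not a quadratic form, so the Pfister-form machinery you invoke has no counterpart. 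As written, the decisive step of the argument is missing, and your own diagnosis (``the main obstacle is \dots proving that the two norm-translated families are guaranteed to intersect'') confirms it.

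What you are missing is that the intersection problem has an explicit, elementary solution inside the very families you built; no norm-equation theory or form theory is needed. Take $j=1$; in $A_1$ take $g(x_1)=x_1+\lambda$, so that $n=\mathrm{N}_{F[x_1]/F}(x_1+\lambda)=\alpha+\lambda^p-\lambda$, and in $A_2$ take the constant polynomial $g(x_2)=\lambda$, so that $n'=\lambda^p$. The matching condition $\alpha+n\beta=\gamma+n'\beta$ then reads $\alpha+(\alpha+\lambda^p-\lambda)\beta=\gamma+\lambda^p\beta$, i.e.\ $\alpha+\beta(\alpha-\lambda)=\gamma$: the $\lambda^p$ terms cancel and what remains is a \emph{linear} equation in $\lambda$, solvable (uniquely) in $F$ because $\beta\neq 0$. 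With this $\lambda$, both algebras contain the cyclic separable field $F[t:\ t^p-t=\gamma+\lambda^p\beta]$, hence they are left linked, with no shift by $\wp(F)=\{c^p-c : c\in F\}$ ever required. This is precisely the paper's proof: it defines $\lambda$ by $\alpha+\beta(\alpha-\lambda)=\gamma$, takes $z=x+(\lambda+x)y$ in $A$ and $z'=x'+\lambda y'$ in $A'$, and checks via Lemma~\ref{Formula} that $z^p-z=z'^p-z'=\gamma+\lambda^p\beta$. The lesson is that the two norm sets need not be compared in any generality: parametrizing one side by the $\wp$-shifted norms $\alpha+\lambda^p-\lambda$ and the other by the $p$-th powers $\lambda^p$, with the \emph{same} parameter $\lambda$, makes the inseparable ($p$-th power) parts cancel and reduces the whole theorem to linear algebra.
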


\begin{proof}
Let $A=F[x,y : x^p-x=\alpha, y^p=\beta, y x y^{-1}=x+1]$ and $A'=F[x',y' : x'^p-x'=\gamma, y'^p=\beta, y' x' y'^{-1}=x'+1]$ be two right linked algebras.
Let $\lambda$ be the unique solution to the linear equation $\alpha+\beta (\alpha-\lambda)=\gamma$ over $F$.
Since $z=x+\lambda y+x y$  satisfies $(\lambda y+x y) z-z (\lambda y+x y)=\lambda y+x y$, according to Lemma \ref{Formula}, $z$ is Artin-Schreier in $A$ satisfying $z^p-z=(x^p-x)+(\lambda y+x y)^p=\alpha+N_{F[x]/F}(\lambda+x) y^p=\alpha+(\alpha+\lambda^p-\lambda) \beta$. Therefore $A$ has the symbol presentation $A=[\alpha+(\alpha+\lambda^p-\lambda) \beta,(\alpha+\lambda^p-\lambda) \beta)_{p,F}=[\gamma +\lambda^p \beta,(\alpha+\lambda^p-\lambda) \beta)_{p,F}$.

Another way of seeing that $A=[\alpha+(\alpha+\lambda^p-\lambda) \beta,(\alpha+\lambda^p-\lambda) \beta)_{p,F}$ is by noticing that $[\alpha+(\alpha+\lambda^p-\lambda) \beta,(\alpha+\lambda^p-\lambda) \beta)_{p,F}$ is Brauer equivalent to $$[\alpha,(\alpha+\lambda^p-\lambda) \beta)_{p,F} \otimes [(\alpha+\lambda^p-\lambda) \beta,(\alpha+\lambda^p-\lambda) \beta)_{p,F}.$$ The second algebra in this product is split and the first one is isomorphic to $[\alpha,\beta)_{p,F}$.

Similarly, $z'=x'+\lambda y'$ is Artin-Schreier in $B$ satisfying $y' z'-z' y'=y'$ and $z'^p-z'=\gamma+\lambda^p \beta$, and therefore $A'=[\gamma+\lambda^p \beta,\beta)_{p,F}$.
Consequently $A$ and $A'$ are also left linked.
\end{proof}

\section{Counterexample for the converse}
We want to construct a counterexample for the converse.
Over fields of imperfect exponent 1, we cannot hope to find such counterexamples, as the following proposition suggests:

\begin{prop}
If $[F:F^p]=p$ then all the $p$-algebras of prime degree over $F$ are right linked.
\end{prop}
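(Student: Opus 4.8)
The plan is to exploit the fact that the hypothesis $[F:F^p]=p$ makes the purely inseparable part of $F$ as small as possible, so that there is essentially only one purely inseparable degree $p$ extension of $F$ available to be shared. Recall the equivalence established in Section~2: two $p$-algebras of prime degree $p$ over $F$ contain a common purely inseparable degree $p$ field extension of the center if and only if they are right linked. Hence it suffices to prove two things: that there is a \emph{unique} purely inseparable degree $p$ extension of $F$, and that \emph{every} $p$-algebra of prime degree $p$ over $F$ contains one. Once both are in hand, any two such algebras must contain the same purely inseparable extension, and right linkage follows immediately.

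First I would establish uniqueness. Since the Frobenius endomorphism $a \mapsto a^p$ carries $F^{1/p}$ isomorphically onto $F$ and carries $F$ onto $F^p$, we get $[F^{1/p}:F]=[F:F^p]=p$. Now let $K/F$ be any purely inseparable extension of degree $p$ and exponent $1$; then $K^p \subseteq F$, so every element of $K$ is a $p$-th root of an element of $F$, giving $K \subseteq F^{1/p}$. Comparing degrees forces $K=F^{1/p}$. Thus $F^{1/p}$ is the unique purely inseparable degree $p$ extension of $F$.

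Next I would check that every $p$-algebra $A$ of prime degree $p$ over $F$ contains a copy of $F^{1/p}$. If $A$ is a division algebra, then by Albert's theorem it is cyclic, so $A \cong [\alpha,\beta)_{p,F}$ for suitable $\alpha \in F$ and $\beta \in F^\times$; the $p$-central generator $y$ with $y^p=\beta$ does not lie in $F$ (it fails to commute with $x$), and $\beta \notin F^p$, since $\beta=c^p$ would force $(c^{-1}y)^p=1$ and hence $c^{-1}y=1$ in the division algebra. Therefore $F[y]\cong F(\beta^{1/p})$ is a purely inseparable degree $p$ subfield, necessarily equal to $F^{1/p}$ by the previous paragraph. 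If instead $A$ is split, so $A\cong M_p(F)\cong\End_F(F^{1/p})$, then the regular action of $F^{1/p}$ on itself embeds $F^{1/p}$ into $A$. In either case $A$ contains $F^{1/p}$. Since every $p$-algebra of prime degree thus contains the single extension $F^{1/p}$, any two of them share it, and so they are right linked.

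The step I expect to demand the most care is the claim that each algebra genuinely contains a purely inseparable degree $p$ subfield: for division algebras this rests on Albert's cyclicity theorem for $p$-algebras of prime degree together with the observation that the right slot cannot be a $p$-th power, and for the split algebra on the regular embedding of $F^{1/p}$. The genuinely new input, however, is the uniqueness of $F^{1/p}$ under the hypothesis $[F:F^p]=p$; this is precisely what upgrades the statement "each algebra contains such a field" to "all of them share one."
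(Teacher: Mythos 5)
Your proof is correct and takes essentially the same approach as the paper: the hypothesis $[F:F^p]=p$ makes $F^{1/p}$ the unique purely inseparable degree-$p$ extension of $F$, every $p$-algebra of prime degree contains such an extension, and therefore any two of them share it and are right linked. You simply make explicit the steps the paper leaves implicit (the uniqueness argument, Albert's cyclicity theorem for the existence of a $p$-central element, and the split case).
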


\begin{proof}
Let $F'=F[\lambda : \lambda^p=t]$ be the unique inseparable field extension of degree $p$ of $F$.
Every division $p$-algebra contains an inseparable field extension of degree $p$ of $F$, which must be isomorphic to $F'$, so every such algebra has a symbol presentation $[\alpha,t)_F$ for some $\alpha \in F$, and they are all right linked.
\end{proof}

Such fields include global and local fields (see \cite[Chapter 2, Lemma 2.7.2]{FriedJarden2008}) and fields of the form $K(t)$ or $K((t))$ where $K$ is perfect. It is important to note that there can be $p$-algebras over such fields of prime degree that do not share all the cyclic separable field extensions of the center. For example, take $K$ to be the perfect closure of the function field in two variables $E(\alpha,\beta)$ over some field $E$ of characteristic $p$, and take  $F=K(t)$ to be the function field in one variable over $K$. Then $[\alpha,t)_{p,F}$ contains $F[x : x^p-x=\alpha]$ but $[\beta,t)_{p,F}$ does not.
This means that sharing all the inseparable field extensions of the center does not imply sharing all the cyclic separable ones.

In order to construct examples of division $p$-algebras that are left linked but not right linked we must therefore turn to larger fields, and the place to start is function fields or Laurent series in two variables over perfect fields.

\begin{exmpl}\label{Mainexample}
The algebras $[1,\alpha)_{p,F}$ and $[1,\beta)_{p,F}$ are not right linked when $F=\mathbb{F}_p(\alpha,\beta)$ or $\mathbb{F}_p((\alpha))((\beta))$.
\end{exmpl}

\begin{proof}
It is enough to prove it for $F=\mathbb{F}_p((\alpha))((\beta))$ because in the other case we can extend scalars to this completion with respect to the $(\alpha,\beta)$-adic valuation. The value groups of these algebras are $\frac{1}{p}\mathbb{Z} \times \mathbb{Z}$ and $\mathbb{Z} \times \frac{1}{p}\mathbb{Z}$ respectively, and with residue division ring the field $\mathbb{F}_p[\lambda : \lambda^p-\lambda=1]$. If $L$ is a maximal subfield of $[1,\alpha)_{p,F}$, then since the rank of $L$ is $p$, one of the following holds: either $L/F$ is unramified, or it is totally ramified. If it is unramified, then its residue field is the residue field of the division algebra, hence $L$ is separable. So if L is purely inseparable, then it is totally ramified, with value group $\frac{1}{p}\mathbb{Z} \times \mathbb{Z}$. Similarly, any purely inseparable maximal subfield of $[1,\beta)_{p,F}$ is a totally ramified extension of the center, with value group $\mathbb{Z} \times \frac{1}{p}\mathbb{Z}$. Therefore, $[1,\alpha)_{p,F}$ and $[1,\beta)_{p,F}$ do not share any maximal subfield that is a purely inseparable extension of the center.
\end{proof}

In case $p=2$, the algebras $[1,\alpha)_{2,F}$ and $[1,\beta)_{2,F}$ over the function field $F=\mathbb{F}_2(\alpha,\beta)$ in two variables over $\mathbb{F}_2$ were exactly the algebras given in \citep{Lam:char2quat} as an example of a pair of left linked quaternion algebras which are not right linked. The proof here, however, is essentially different, making use of valuations instead of quadratic forms, in order to make it work for arbitrary prime numbers.

%

\section*{Acknowledgments}
I thank Jean-Pierre Tignol for his help and support, and especially for suggesting the proof of Example \ref{Mainexample}. I thank also Uzi Vishne and Adrian Wadsworth for their comments on the manuscript.

\section*{Bibliography}
\bibliographystyle{amsalpha}
\bibliography{bibfile}
\end{document}